\let\mathcal \undefined
\def\mathcal{\mathscr}
\let\emptyset \undefined
\let\ge       \undefined
\let\le       \undefined
\theoremstyle{plain}
\newtheorem{theorem}{Theorem}
\theoremstyle{remark}
\newtheorem{remark}[theorem]{Remark}
\theoremstyle{plain}
\newtheorem{lemma}[theorem]{Lemma}
\def\R{{\mathbb R}}
\renewcommand{\P}{{\mathbb P}}
\renewcommand{\d}{\delta}
\newcommand{\eps}{\varepsilon}
\newcommand{\s}{^*}
\newcommand{\lb}{\langle}
\newcommand{\rb}{\rangle}
\newcommand{\n}{\Vert}
\newcommand{\one}{{{\bf 1}}}
\begin{document}

\title{Compactness in the Lebesgue-Bochner spaces $L^p(\mu;X)$}

\author{Jan van Neerven}
\address{Delft Institute of Applied Mathematics
\\Delft University of Technology
\\P.O. Box 5031, 2600 GA Delft
\\The Netherlands}
\email{J.M.A.M.vanNeerven@tudelft.nl}

\keywords{Compactness, Lebesgue-Bochner spaces, vector-valued Banach function spaces}

\subjclass[2000]{Primary: 46E40, Secondary: 46E30, 46B50}

\date\today

\dedicatory{Dedicated to the memory of Adriaan Cornelis Zaanen, on the occasion of his 100th birthday}

\thanks{The author was supported by
VICI subsidy 639.033.604 in the `Vernieuwingsimpuls' programme of the
Netherlands Organization for Scientific Research (NWO)}

\begin{abstract} Let $(\Omega,\mu)$ be a finite measure space, $X$ a Banach space,
and let $1\le p<\infty$. 
The aim of this paper is to give an elementary proof of the 
Diaz--Mayoral theorem that a subset $V\subseteq L^p(\mu;X)$ is relatively compact in $L^p(\mu;X)$ if 
and only if it is uniformly $p$-integrable, uniformly tight, and scalarly relatively compact.
\end{abstract}

\maketitle

Let $(\Omega,\mu)$ be  
a finite measure space and let $X$ be a Banach space. For $1\le p<\infty$ we denote by
$L^p(\mu;X)$ the Banach space of all strongly $\mu$-measurable functions $f: \Omega\to X$
for which $$\n f\n_p := \Big(\int_\Omega \n f\n^p\,d\mu\Big)^{1/p}$$ is finite.
Recall that a function $f:\Omega\to X$ is {\em strongly $\mu$-measurable} if it is the 
pointwise limit $\mu$-almost everywhere
of a sequence of simple functions $f_n:\Omega\to X$. 

Our aim is to present an elementary proof of the following characterisation of 
relative norm compactness in $L^p(\mu;X)$, due to Diaz and Mayoral \cite{DM}. 
Recall that a set $V\subseteq L^p(\mu;X)$ is called {\em uniformly $L^p$-integrable} if
$$\lim_{r\to\infty} \sup_{f\in V} \big\n \one_{\{\n f\n > r\}}f \big\n_p = 0,$$
{\em uniformly tight} if for all $\eps>0$ there exists a compact set $K\subseteq X$ such that
$$\sup_{f\in V} \mu(\{f\not\in K\}) \le \eps,$$ 
and {\em scalarly relatively compact} if for all $x\s\in X\s$ the set $\{\lb f,x\s\rb:\ f\in V\}$ 
is relatively compact in $L^p(\mu)$.

\begin{theorem}\label{thm:1} Let $1\le p<\infty$. 
A subset $V\subseteq L^p(\mu;X)$ is relatively compact if 
and only if it is uniformly $L^p$-integrable, uniformly tight, and scalarly relatively compact.
\end{theorem}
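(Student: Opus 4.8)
The statement is an equivalence, and I would treat the two implications separately, the necessity being comparatively soft while the sufficiency carries the real content. Assume first that $V$ is relatively compact, i.e.\ totally bounded. Scalar relative compactness is immediate, since for each $x\s\in X\s$ the map $f\mapsto\lb f,x\s\rb$ is bounded and linear from $L^p(\mu;X)$ into $L^p(\mu)$ and hence sends relatively compact sets to relatively compact sets. For uniform $L^p$-integrability I would use that $f\mapsto\n f\n^p$ is (nonlinearly but) continuous from $L^p(\mu;X)$ into $L^1(\mu)$; thus $\{\n f\n^p:\ f\in V\}$ is relatively compact in $L^1(\mu)$, hence uniformly integrable, which is exactly the asserted condition. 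For uniform tightness, fix $\eps>0$; for each $k\in\N$ choose a sufficiently fine finite net in $V$ and replace each of its members by a compact set supporting it up to small measure (possible because strongly measurable functions are essentially separably valued and finite Borel measures on complete separable metric spaces are tight). With $C^{(k)}$ the union of these finitely many compacts, and using Chebyshev's inequality to convert $L^p$-closeness to a net member into smallness of the set where $f$ is far from that member, one arranges $\sup_{f\in V}\mu(\{d(f,C^{(k)})>1/k\})\le\eps 2^{-k}$. Then $K:=\bigcap_k\{x\in X:\ d(x,C^{(k)})\le 1/k\}$ is closed and totally bounded, hence compact, and $\sup_{f\in V}\mu(\{f\notin K\})\le\sum_k\eps 2^{-k}=\eps$.

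For sufficiency assume the three conditions. As $L^p(\mu;X)$ is complete it suffices to show that every sequence $(f_n)\subseteq V$ has a Cauchy subsequence. The geometric core is a \emph{bridge lemma}: if $K\subseteq X$ is norm-compact, then for every $\eps>0$ there are $x_1\s,\dots,x_N\s\in X\s$ and $\d>0$ such that $x,y\in K$ with $\max_{1\le j\le N}|\lb x-y,x_j\s\rb|\le\d$ satisfy $\n x-y\n\le\eps$; indeed, applying Hahn--Banach to each point of the compact set $(K-K)\cap\{\n z\n\ge\eps\}$ and extracting a finite subcover of the resulting open sets produces the functionals, with $\d=\eps/2$. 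Given $\eps>0$, uniform $L^p$-integrability yields $r$ with $\sup_{f\in V}\n\one_{\{\n f\n>r\}}f\n_p\le\eps$, and uniform tightness then yields a compact $K$ with $\sup_{f\in V}\mu(\{f\notin K\})\le(\eps/r)^p$; I apply the bridge lemma at level $\eps$ to the compact set $K_r:=(K\cap\{\n x\n\le r\})\cup\{0\}$. Carrying this out for $\eps=1/k$, collecting all the functionals so obtained into a single countable family, and invoking scalar relative compactness together with a diagonal argument, I pass to one subsequence along which $\lb f_n,x\s\rb$ converges in $L^p(\mu)$ --- in particular is Cauchy --- for every functional in the family.

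It then remains to estimate $\n f_n-f_m\n_p$ along this subsequence. Fixing $k$ and its data $(r,K,x_1\s,\dots,x_N\s,\d)$ for $\eps=1/k$, I split $\Om$ into the good set $G$ on which $f_n,f_m\in K\cap\{\n\cdot\n\le r\}$ and $\max_j|\lb f_n-f_m,x_j\s\rb|\le\d$ --- where the bridge lemma gives $\n f_n-f_m\n\le\eps$ --- and its complement, handled in three pieces. Where one of $\n f_n\n,\n f_m\n$ exceeds $r$ I bound $\n f_n-f_m\n$ by twice the larger norm and invoke uniform $L^p$-integrability; where one function leaves $K$ but has norm $\le r$ I use $\n f_n-f_m\n\le 2r$ with the measure bound $(\eps/r)^p$; and on the remaining set $\{\max_j|\lb f_n-f_m,x_j\s\rb|>\d\}$ I again use $\n f_n-f_m\n\le 2r$ and control its measure by $\d^{-p}\sum_j\n\lb f_n-f_m,x_j\s\rb\n_p^p$. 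The point is that this last quantity is formed from $f_n,f_m$ themselves and therefore tends to $0$ as $n,m\to\infty$, while the first two pieces contribute at most $C\eps^p$ with $C$ depending only on $p$ and $\mu(\Om)$; hence
\[
\limsup_{n,m\to\infty}\n f_n-f_m\n_p^p\le C\eps^p\qquad\text{for every }k,
\]
and letting $k\to\infty$ shows the subsequence is Cauchy, which completes the proof.

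I expect the main obstacle to be making exactly these estimates close. The naive bound $\n f_n-f_m\n\le 2r$ on the region where a function is large would introduce a factor $r=r(\eps)$ blowing up as $\eps\to0$; the cure is the comparison with the larger of the two norms on the uniform-integrability region, and, equally essentially, applying the bridge functionals to $f_n$ and $f_m$ directly rather than to truncated approximants, so that the amplifying constants $r$, $N$ and $\d$ never multiply the single term that must be driven to zero.
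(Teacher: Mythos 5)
Your proof is correct, and the sufficiency direction takes a genuinely different route from the paper. Both arguments perform the same preliminary reduction: uniform $L^p$-integrability and uniform tightness are used to cut away the set where $\n f\n>r$ and the set where $f\notin K$, at a cost of $O(\eps)$, leaving functions with values in a fixed compact set. The divergence is in how the compact-valued case is handled. The paper isolates this as a separate lemma (scalar relative compactness plus compact range implies relative compactness) and proves it by passing to a separable subspace, embedding isometrically into $C[0,1]$, and using a Schauder basis: a finite $\eps$-net in $K$ shows $\n f_n-P_Nf_n\n_p\lesssim\eps$ for a fixed partial-sum projection $P_N$, after which scalar compactness is applied coordinatewise in the finite-dimensional range of $P_N$. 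Your \emph{bridge lemma} replaces all of this by the purely geometric observation that on a compact set the norm is uniformly controlled by finitely many functionals (Hahn--Banach applied to the compact set $(K-K)\cap\{\n z\n\ge\eps\}$ plus a finite subcover), so that scalar Cauchyness along the finitely many $x_j\s$ converts directly, via Chebyshev, into $L^p$-Cauchyness; this avoids the $C[0,1]$-embedding and basis machinery entirely and is arguably more self-contained, at the price of the somewhat more delicate bookkeeping you correctly flag (the factor $r=r(\eps)$ must never multiply the $\eps$-sized error terms, only the single term that tends to $0$ as $n,m\to\infty$). The paper's formulation has the advantage that its lemma is a clean reusable statement, which is invoked again verbatim for the Banach function space version (Theorem \ref{thm:2}). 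Your necessity argument is essentially the paper's for tightness; for uniform integrability you argue via continuity of $f\mapsto\n f\n^p$ into $L^1(\mu)$ and uniform integrability of relatively compact subsets of $L^1(\mu)$, where the paper gives a direct $\eps$-net estimate --- both are routine and correct.
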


The proof in \cite{DM} relies on the Diestel--Ruess--Schachermayer characterisation \cite{DRS} 
of weak compactness in $L^1(\mu;X)$ and the notion of
Bocce oscillation, which was introduced and studied by Girardi \cite{G} and Balder--Girardi--Jalby \cite{BGJ} 
in the context of compactness in $L^1(\mu;X)$. An extension of the Diaz--Mayoral 
result to a class of $X$-valued Banach function
spaces, with an alternative proof based on Prohorov's tightness theorem, was obtained in \cite{N}.
A truly elementary proof of Theorem \ref{thm:1} seems to be lacking, and the present paper aims to fill this gap.

The main step in the `if' part of the proof is the following observation.

\begin{lemma}
If $V$ is a scalarly relatively 
compact subset in $L^p(\mu;X)$ 
consisting of functions taking values
in a fixed compact subset $K$ of $X$, then $V$ is relatively compact
 in $L^p(\mu;X)$. 
\end{lemma}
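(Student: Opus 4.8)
The plan is to show that $V$ is totally bounded in $L^p(\mu;X)$; since $L^p(\mu;X)$ is complete, relative compactness follows. Because $K$ is compact, its closed linear span $X_0$ is a separable closed subspace of $X$ containing the values of every $f\in V$, and by Hahn--Banach scalar relative compactness with respect to $X\s$ descends to $X_0\s$ (each $\phi\in X_0\s$ extends to $X\s$ and agrees with its extension on the values of $f$). I may therefore assume $X$ separable and fix a sequence $(x_n\s)_{n\ge1}$ in the closed unit ball of $X\s$ that separates the points of $X$.

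The first genuine step is a geometric fact: compactness of $K$ turns separation by the $x_n\s$ into uniform norm control. I claim that for every $\d>0$ there are $m\in\N$ and $\eta>0$ such that, for all $x,y\in K$,
$$ \max_{1\le j\le m}|\lb x-y,x_j\s\rb|\le\eta \ \Rightarrow\ \n x-y\n\le\d. $$
This follows by contradiction: a failing pair of sequences $x_k,y_k\in K$ would, after passing to convergent subsequences $x_k\to x$ and $y_k\to y$ in $K$, yield $x\ne y$ with $\lb x-y,x_j\s\rb=0$ for every $j$, contradicting that $(x_j\s)$ separates points.

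Next I would use scalar relative compactness. Consider $J\colon V\to L^p(\mu)^m$, $Jf=(\lb f,x_1\s\rb,\dots,\lb f,x_m\s\rb)$. By hypothesis each coordinate set $\{\lb f,x_j\s\rb:\ f\in V\}$ is relatively compact in $L^p(\mu)$, so $J(V)$ lies in a finite product of relatively compact sets and is thus totally bounded. Hence, for any $\g>0$ there are $f_1,\dots,f_M\in V$ such that every $f\in V$ satisfies $\n\lb f-f_i,x_j\s\rb\n_p<\g$ for all $j\le m$, for some $i=i(f)$.

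Finally, I would upgrade this scalar closeness to closeness in $L^p(\mu;X)$, which is the heart of the matter. For such $f,f_i$, Chebyshev's inequality gives $\mu(|\lb f-f_i,x_j\s\rb|>\eta)<(\g/\eta)^p$, so the set $B$ where some coordinate exceeds $\eta$ has $\mu(B)<m(\g/\eta)^p$. On $\Om\setminus B$ the geometric fact yields $\n f-f_i\n\le\d$ pointwise, while on $B$ the crude bound $\n f-f_i\n\le\operatorname{diam}(K)=:D<\infty$ applies. Integrating,
$$ \n f-f_i\n_p^p\le\d^p\,\mu(\Om)+D^p\,m\,(\g/\eta)^p. $$
Given $\eps>0$, choosing first $\d$ (which fixes $m,\eta$) and then $\g$ small makes the right-hand side below $\eps^p$, so $\{f_1,\dots,f_M\}$ is a finite $\eps$-net for $V$. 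The main obstacle is precisely this synthesis: one must control the norm distance pointwise off a small exceptional set while the values remain in the bounded set $K$, and the compactness-to-coordinates reduction is exactly what renders that exceptional set negligible.
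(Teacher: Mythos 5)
Your proof is correct, but it takes a genuinely different route from the paper's. The paper reduces to the case where $X$ has a Schauder basis (via the isometric embedding of a separable space into $C[0,1]$), approximates $K$ to within $\eps$ by the span $X_N$ of finitely many basis vectors, shows $\n f - P_N f\n_p \le (1+\n P_N\n)\eps$ for $f\in V$ using a nearest-point partition of $\Omega$, and then extracts a convergent subsequence of $(P_N f_n)$ coordinatewise; uniform boundedness of the projections $P_N$ closes the argument. You instead work with a separating sequence $(x_j\s)$ of functionals and prove a quantitative form of the fact that on the compact set $K$ the norm topology agrees with the topology induced by the $x_j\s$: for each $\d>0$ there are $m$ and $\eta>0$ so that smallness of the first $m$ coordinates of $x-y$ forces $\n x-y\n\le\d$ on $K\times K$. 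Combined with Chebyshev's inequality and the bound $\operatorname{diam}(K)$ off the exceptional set, this upgrades a finite net for the scalarized family in $L^p(\mu)^m$ to a finite $\eps$-net for $V$ itself, i.e.\ you prove total boundedness directly rather than sequential compactness. Your argument avoids the Banach--Mazur embedding into $C[0,1]$ and all Schauder basis machinery, so it is, if anything, more self-contained and elementary; the paper's version has the mild advantage of producing the limit explicitly as an element of $L^p(\Omega;X_N)$ and of reusing the same subsequence-plus-diagonal scheme that drives the proof of the main theorem. The only points worth spelling out in a final write-up are the (routine) justification that the net points in $L^p(\mu)^m$ can be taken from $J(V)$ itself, and the order of choices ($\d$ first, which fixes $m$ and $\eta$, then $\g$, then the net), which you do state correctly.
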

\begin{proof} 
We may assume that $\mu(\Omega) =1$.

It suffices to show that for all $\eps>0$, every sequence $(f_n)_{n\ge 1}$ in $V$ has a subsequence $(f_{n_k})_{k\ge 1}$
such that $\limsup_{k,l\to\infty} \n f_{n_k} - f_{n_l}\n_p \le \eps.$
Applying this inductively with a sequence $\eps_m\downarrow 0$, a diagonal argument will produce a 
convergent subsequence of  $(f_n)_{n\ge 1}$.

Since the compact set $K$ is contained in a separable
closed subspace of $X$, in the rest of the proof we may assume that $X$ is separable. 
Since every separable Banach space embeds  into $C[0,1]$ isometrically 
and the latter  has a Schauder basis (see, e.g.,
\cite[pages 35--36]{D}), 
we may assume that $X$ has a Schauder basis $(x_n)_{n\ge 1}$.

Let $X_N$ be the linear span of the basis vectors $x_1,\dots,x_N$.
For large enough $N$ we have we have $\sup_{x\in K} d(x,X_N)<\eps$.
Fixing such an $N$, choose a finite collection $y_1,\dots,y_M \in X_N$
such that for each $x\in K$ we have $d(x,y_m)<\eps$ for some $1\le m\le M$. For $n\ge 1$ and $1\le m\le M$  
let $\Omega_{mn}$ be the set of all $\omega\in\Omega$ such 
that $d(f_n(\omega),y_k)$ takes its first minimum at $k=m$, i.e.,
$\omega\in\Omega_{mn}$ if and only if 
$$ d(f_n(\omega),y_m) = \min_{1\le k\le M} d(f_n(\omega),y_k)$$
and $$d(f_n(\omega),y_k) > d(f_n(\omega),y_m) \ \ \hbox{ for } 1\le k\le m-1.$$  
The function
$\tilde f_n = \sum_{m=1}^M\one_{\Omega_{mn}}\otimes y_m$ is strongly measurable and satisfies
 $\n f_n - \tilde f_n\n_\infty< \eps$. 
Denoting by $P_N$ the partial sum projection onto $X_N$, it follows that
$$ \n f_n - P_Nf_n\n_p \le \n \tilde f_n - P_N\tilde f_n\n_p + \n (\tilde f_n- f_n) - P_N(\tilde f_n-f_n)\n_p
\le 0 + (1+\n P_N\n) \eps.$$
Clearly the functions $P_N f_n$ form a scalarly relatively compact set in $L^p(\Omega;X_N)$ and therefore, by arguing coordinatewise,
we find a subsequence $(P_Nf_{n_k})_{k\ge 1}$ and a function $g\in L^p(\Omega;X_N)$ such that $P_Nf_{n_k}\to g$
in $L^p(\Omega;X_N)$. 
It follows that 
$$ \limsup_{k,l\to\infty} \n f_{n_k} - f_{n_l}\n_p \le 2(1+\n P_N\n)\eps 
+ \limsup_{k,l\to\infty} \n P_Nf_{n_k} - P_Nf_{n_l}\n_p
= 2(1+\n P_N\n)\eps.
$$
As $\sup_{N\ge 1} \n P_N\n <\infty$ (see, e.g., \cite[pages 32--33]{D}), this concludes the proof. 
\end{proof}

\begin{proof}[Proof of Theorem \ref{thm:1}] 
We may assume that $\mu(\Omega) =1$.

`If': 
Suppose that $V$ is uniformly $p$-integrable, uniformly tight, and scalarly relatively compact.
As in the proof of the lemma, in order to prove that $V$ is relatively compact it suffices to show that 
 for all $\eps>0$, every sequence $(f_n)_{n\ge 1}$ in $V$ has a subsequence $(f_{n_k})_{k\ge 1}$
such that $\limsup_{k,l\to\infty} \n f_{n_k} - f_{n_l}\n_p \le \eps.$

Fix a sequence $(f_n)_{n\ge 1}$ in $V$ and a number $\eps>0$, and 
choose $r>0$ large enough such that $\sup_{n\ge 1} \n \one_{\{\n f_n\n > r\}}f_n \n_p < \eps$.
Choose a compact set $K\subseteq X$ such that $\sup_{n\ge 1} \mu\{f_n\not\in K\} < \eps^p/r^p.$
Then, for all $n,m\ge 1$,
\begin{align*}\big\n f_n - f_m\big\n_p
 & \le 2\eps + \big\n \one_{\{\n f_n\n \le r\}} f_n - \one_{\{\n f_m\n \le r\}} f_m\big\n_p  
\\ & \le 2\eps + \big\n \one_{\{\n f_n\n \le r, \ f_n\not\in K\}} f_n - \one_{\{\n f_m\n \le r,\ f_m\not\in K\}} f_m\big\n_p  
\\ & \quad\quad\ +\big\n \one_{\{\n f_n\n \le r, \ f_n\in K\}} f_n - \one_{\{\n f_m\n \le r, \ f_m\in K\}} f_m\big\n_p  
\\ & \le 2\eps + r\mu(\{f_n\not\in K\})^{1/p}  + r\mu(\{f_m\not\in K\})^{1/p} 
\\ & \quad\quad\ +\big\n \one_{\{f_n\in K\}} f_n - \one_{\{f_m\in K\}} f_m\big\n_p  
\\ & \quad\quad\ +\big\n \one_{\{\n f_n\n > r, \ f_n\in K\}} f_n - \one_{\{\n f_m\n > r, \ f_m\in K\}} f_m\big\n_p  
\\ & \le 6\eps + \big\n \one_{\{f_n\in K\}} f_n - \one_{\{f_m\in K\}} f_m\big\n_p.
\end{align*}
The set $\{\one_K f: \ f\in V\}$ is scalarly relatively
compact (as $\n \lb \one_K f - \one_K g,x\s\rb\n_p \le \n \lb f - g,x\s\rb\n_p$ for 
all $f,g\in V$ and $x\s\in X\s$), and hence relatively compact by the lemma.
Passing to a subsequence for which $(\one_{\{f_{n_k}\in K\}} f_{n_k})_{k\ge 1}$ is convergent in $L^p(\mu;X)$,
we find that for large enough $k,l$,
$$\big\n f_{n_k} - f_{n_l}\big\n_p \le 6\eps + \big\n \one_{\{f_{n_k}\in K\}} f_{n_k} - \one_{\{f_{n_l}\in K\}} f_{n_l}\big\n_p
\le 7\eps.$$

\smallskip
`Only if': If $V$ is relatively compact, then $V$ is scalarly relatively compact.
The proof that $V$ is uniformly $L^p$-integrable is routine; it is included for the
reader's convenience.
Given $\eps>0$ choose an $\eps$-net $f_1,\dots,f_N\in V$ and fix $r\ge 1$ so large that 
$\sup_{1\le n\le N} \n \one_{\{\n f_n\n>r\}}f_n\n_p<\eps$.
Pick an arbitrary $f\in V$ and choose an index $1\le n\le N$ such that $\n f_n-f\n_p <\eps$.
Then, by Chebyshev's inequality,
$$
\begin{aligned}
\ &  \big\n  \one_{\{\n f\n>2r\}}f\big\n_p
\\ & \qquad \le \eps + \big\n  \one_{\{\n f\n>2r\}}f_n\big\n_p
\\ & \qquad \le \eps + \big\n  \one_{\{\n f\n>2r, \ \n f_n-f\n \le r\}}f_n\big\n_p 
+ \big\n \one_{\{\n f_n-f\n > r\}}f_n\big\n_p
\\ & \qquad \le \eps + \big\n \one_{\{\n f_n\n>r\}}f_n\big\n_p 
+ \big\n \one_{\{\n f_n-f\n > r, \ \n f_n\n \le r\}}f_n\big\n_p
 + \big\n \one_{\{ \n f_n\n > r\}}f_n\big\n_p
\\ & \qquad \le 3\eps + r \mu(\{\n f_n-f\n > r\})^{1/p} 
\\ & \qquad \le 3\eps + \n f_n - f\n_p 
\\ & \qquad \le 4\eps. 
\end{aligned}
$$
The proof that $V$ is uniformly tight is also standard;
again it is included for the reader's convenience.

Fix $\eps>0$ and choose $n_0\ge 1$ so large that $2^{2-pn_0} < \eps$. 
For every $n\ge n_0$ we choose a finite $2^{-2n}$-net $f_{1,n},\dots f_{N_n,n}$ in $V$.
Using the standard fact (see, e.g., \cite{VTC}) that the distribution of a strongly measurable
function is tight, we may choose a compact set $K_n\subseteq X$ such that
$$ \mu(\{f_{j,n}\in K_n\}) \ge 1-2^{-pn}, \qquad 1\le j\le N_n.$$
Let
$ L_n := \{x\in X: \ d(x,K_n) < 2^{-n}\}.$
Fixing an arbitrary $f\in V$, choose an index $1\le j\le N_n$
such that $\n f-f_{j,n}\n_p < 2^{-2n}$. 
Then, by Chebyshev's inequality,
$$
\begin{aligned}
 \mu(\{f\not\in L_n\})
&  \le \mu(\{\n f-f_{j,n}\n\ge 2^{-n}\}) + \mu(\{f_{j,n}\not\in K_{n}\})
\\ &  \le 2^{pn} \n f-f_{j,n}\n_p^p + 2^{-pn}
\le 2^{-pn}+2^{-pn} = 2^{1-pn}.
\end{aligned}
$$

Let
$ K := \overline{\bigcap_{n\ge n_0} L_n}.$
If finitely many open balls $B(x_i,2^{-n})$ cover $K_n$, then the open balls
$B(x_i, 2^{1-n})$ cover $\overline{L_n}$.
Hence $K$ is totally bounded and therefore compact.
For all $f\in V$,
$$ \mu(\{f\not\in K\}) \le \sum_{n\ge n_0} \mu(\{f \not\in L_n\})
\le \sum_{m\ge m_0} 2^{1-pm} \le 2^{2-pm_0} < \eps.
$$
This proves that $V$ is uniformly tight.
\end{proof}

\begin{remark}
In the theorem and the lemma we cannot replace `(scalarly) relatively compact' by
`(scalarly) compact'. To see this let $\Omega = \{\omega_1,\omega_2\}$
a two-point set, $\mu(\{\omega_1\}) = \mu(\{\omega_2\})=1/2$,
and $V$ the collection of all non-zero functions $f: \Omega\to \R^2$
taking values in the closed unit ball of $\R^2$. 

For each $c = (c_1,c_2)\in \R^2$,
$\lb f, c\rb$ takes values in the interval $[-|c|,|c|]$. We claim that
for each $y\in [-|c|,|c|]$ we have $y = \lb f, c\rb = c_1 f(\omega_1) + c_2 f(\omega_2)$
for a suitable choice of $f\in V$. Indeed, we may suppose that $c\not=0$. 
If $y\not = 0$ take $f(\omega_i) = \frac{c_i}{|c|^2} y$, $i=1,2$; if 
$y=0$ take $f(\omega_1) = \frac{c_2}{|c|}$ and $f(\omega_2) = -\frac{c_1}{|c|}$.
Hence, $V$ is scalarly compact. Clearly, $V$ is relatively compact in $L^2(\mu;\R^2)$, 
but not compact (as $0\not\in V$).
\end{remark}

Let $E$ be a Banach function space over the finite measure space $(\Omega,\mu)$ (see, e.g., \cite{Z}),
and define $E(X)$ as the Banach space of all strongly $\mu$-measurable functions 
$f:\Omega\to X$ such that $\omega\mapsto \n f(\omega)\n$ belongs to $E$.  
It has been shown in \cite{N} that if $E$ has order continuous norm, 
then Theorem \ref{thm:1} extends to $E(X)$ provided one replaces `uniformly $p$-integrable'
by `almost order bounded'; the proof is based on Prokhorov's compactness theorem.
Recall that a subset $F$  of a Banach lattice 
$E$ is called {\em almost order bounded} if for every $\eps>0$ there exists an element $x_\eps\in E_+$ such that
$$F\subseteq [-x_\eps,x_\eps]+ B(\eps),$$ where 
$[-x_\eps,x_\eps] := \{y\in E: \ -x_\eps\le y\le x_\eps\}$ and $B(\eps) :=\{x\in X: \ \n
x\n < \eps\}$. 
It was also shown in \cite{N}  that if $E$ has order continuous norm, 
then every almost order bounded set is {\em uniformly $E$-integable}, i.e.,
$$\lim_{r\to\infty} \sup_{f\in V} \big\n \one_{\{\n f\n > r\}}f \big\n_{E} = 0.$$

The question as to whether Theorem \ref{thm:1} extends to the Banach function space setting in its present
form was left open in \cite{N}. The answer is affirmative:

\begin{theorem}\label{thm:2} Let $E$ have order continuous norm. 
A subset $V\subseteq E(X)$ is relatively compact if 
and only if it is uniformly $E$-integrable, uniformly tight, and scalarly relatively compact.
\end{theorem}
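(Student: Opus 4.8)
The plan is to mirror the structure of the proof of Theorem~\ref{thm:1}, replacing the two places where the $L^p$ structure entered with the corresponding Banach function space ingredients. The proof of the lemma used the $L^p$ structure only through the uniform boundedness of the partial sum projections and the coordinatewise extraction of a convergent subsequence; since these arguments are purely about the Schauder basis and scalar relative compactness, the lemma and its proof carry over verbatim to $E(X)$ once one observes that $\n f - \tilde f\n_\infty < \eps$ together with $\mu(\Om) < \infty$ forces $\n f - \tilde f\n_{E} \le \eps\,\n\one_\Om\n_{E}$, so the estimate $\n f_n - P_N f_n\n_{E} \le (1+\n P_N\n)\eps\n\one_\Om\n_E$ replaces the $L^p$ estimate. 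Thus the first step is to restate the lemma for $E(X)$ and check that every inequality in its proof survives this substitution.

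The second and main step is to adapt the `if' direction. Here the single essential change is that uniform $p$-integrability must be replaced by uniform $E$-integrability in the opening truncation. First I would fix a sequence $(f_n)$ and $\eps > 0$, use uniform $E$-integrability to pick $r$ with $\sup_n \n\one_{\{\n f_n\n > r\}} f_n\n_{E} < \eps$, and use uniform tightness to pick a compact $K$. The difficulty is that the chain of estimates in the $L^p$ case used, twice, the crude bound $\n\one_A f\n_p \le r\,\mu(A)^{1/p}$ on the set where $\n f\n \le r$ but $f \notin K$. In a general Banach function space the analogue is $\n\one_A f\n_{E} \le r\,\n\one_A\n_{E}$, so I must arrange that $\n\one_A\n_{E}$ is small when $\mu(A)$ is small. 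This is exactly where order continuity of the norm is used: for an order continuous Banach function space, $\n\one_{A_k}\n_E \to 0$ whenever $\mu(A_k) \to 0$, because $\one_{A_k} \downarrow 0$ in order along a suitable subsequence and dominated sets have norm tending to zero. Hence I would first choose $K$ so that $\mu(\{f_n \notin K\})$ is small enough to make $r\,\n\one_{\{f_n\notin K\}}\n_E < \eps$ for all $n$, which is possible because uniform tightness controls $\mu(\{f \notin K\})$ uniformly and order continuity converts this into uniform smallness of $\n\one_{\{f\notin K\}}\n_E$.

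The hard part, therefore, is precisely this quantitative use of order continuity: I must verify that uniform tightness, which is stated in terms of $\mu$-measure, delivers uniform control of the $E$-norm of the indicator of the exceptional set. The clean way to do this is to note that order continuity of the norm on a Banach function space over a finite measure space is equivalent to absolute continuity of the norm, meaning $\lim_{\mu(A)\to 0}\n\one_A\n_E = 0$; granting this, choosing $K$ with $\sup_n \mu(\{f_n \notin K\})$ below the threshold that forces $\n\one_A\n_E$ small completes the estimate. With this in hand the remainder of the argument is identical to the $L^p$ case: the truncated set $\{\one_K f : f \in V\}$ is scalarly relatively compact and takes values in the compact set $K \cup \{0\}$, so by the (now $E(X)$-)lemma it is relatively compact, and passing to a convergent subsequence yields $\limsup_{k,l} \n f_{n_k} - f_{n_l}\n_E \le C\eps$.

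Finally, for the `only if' direction, relative compactness trivially gives scalar relative compactness and uniform tightness exactly as before, since the tightness argument used only the measure-theoretic tightness of distributions of strongly measurable functions together with $\eps$-nets, with no reliance on the $L^p$ norm beyond Chebyshev-type estimates that I would replace by the Banach function space analogues. That relative compactness implies uniform $E$-integrability is shown in \cite{N}, or can be re-derived by the same $\eps$-net argument used for uniform $L^p$-integrability, again invoking absolute continuity of the norm to control the tails. Thus no new obstacle arises in the `only if' part, and the whole proof reduces to the single new ingredient of converting measure-smallness into $E$-norm-smallness via order continuity.
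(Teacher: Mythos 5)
Your proposal is correct and follows the same route as the paper, which proves Theorem \ref{thm:2} by citing \cite{N} for the `only if' part and declaring the `if' part a \emph{mutatis mutandis} extension of Theorem \ref{thm:1}; you have in fact supplied the one mutation the paper leaves implicit, namely that order continuity (equivalently, absolute continuity of the norm of $\one_\Omega$) converts the measure bound coming from uniform tightness into the required smallness of $\Vert \one_{\{f\notin K\}}\Vert_E$, replacing the Chebyshev-type bound $r\mu(A)^{1/p}$. The paper additionally records, after the theorem, a self-contained elementary proof of the `only if' part (a contradiction argument resting on the lemma that $\inf\{\Vert\one_A\Vert_E:\ \mu(A)\ge r\}>0$), which you do not need since you defer to \cite{N} exactly as the paper's official proof does.
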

\begin{proof}
By the above, the `only if' part follows from the results of \cite{N}.
The `if' part follows by extending the `if' part of the proof of Theorem \ref{thm:1} {\em mutatis mutandis}.
\end{proof}

The proof of the `only if' part of Theorem \ref{thm:1} does not immediately generalise: both the proof of uniform
integrability and tightness depend on Chebyshev's inequality. We finish the paper by indicating alternative and
elementary proof of the `only if' part 
that works in the setting of an order continuous Banach function space.

As was already pointed out, relatively compact sets in $E(X)$ are almost order bounded,
and if $E$ has order continuous norm, then every almost order bounded set in $E(X)$
is uniformly $E$-integrable for all $1\le p<\infty$. 
It thus remains to prove that if $V\subseteq E(X)$ is relatively compact and $E$ has order continuous norm, 
then $V$ is uniformly tight.
Arguing by contradiction, suppose $V$ fails to be uniformly tight. 

{\em Claim:} There exist $\eps_0>0$ and $\delta_0>0$
such that for all compact $K\subseteq X$ we can find $f\in V$ with $\mu(\{f\not\in K + B_{\d_0}\}) > \eps_0$.
Here, $B_{\delta_0}$ is the open ball centred at $0$ with radius $\delta_0$ and 
$K+B_{\delta_0} = \{x+y: \ x\in K, \, y\in B_{\delta_0}\}$.

If this were false, we could fix an arbitrary $\eps>0$ and select, for each $n\ge 1$, a compact set $K_n\subseteq X$ 
such that $\mu(\{f\not \in K_n + B_{1/2^n}\}) \le \eps/2^n$ for all $f\in V$. 
The set $K:= \bigcap_{n\ge 1}\overline{K_n + B_{1/2^n}}
$ is totally bounded and therefore compact, and $\mu(\{f\not\in K\}) \le\sum_{n\ge 1} \eps/2^n = \eps$ for all $K$.
This contradicts the assumption that $V$ fails to be uniformly tight. This proves the claim.

Fix an arbitrary $f_1\in V$ and choose a
compact set $K_1\subseteq X$ such that $\mu(\{f_1\not\in K_1\}) \le \frac12\eps_0$.
Choose $f_2\in V$ such that $\mu(\{f_2\not\in K_1 + B_{\delta_0}\}) > \eps_0$ and choose a compact 
$K_2\subseteq X\setminus B_0$ such that $K_1\subseteq K_2$ and $\mu(\{f_2\not\in K_2\}) \le \frac12\eps_0$.
Proceed inductively we arrive at a sequence $(f_n)_{n\ge 1}\subseteq V$ and a, increasing sequence 
$(K_n)_{n\ge 1}$ of compact sets in $X\setminus B_0$ with the following properties:
\begin{enumerate}
 \item [\rm (i)] $\mu(\{f_j\not\in K_m\}) \le \frac12 \eps_0$ whenever $1\le j\le m$;
 \item [\rm(ii)] $\mu(\{f_n\not\in K_m + B_{\delta_0}\}) > \eps_0$ whenever $n > m$.
\end{enumerate}
For $1\le m < n$ set 
$$\Omega_{m,n} := \{f_m \in K_m, \ f_n\not\in K_m + B_{\delta_0}\}.$$
Then $\mu(\Omega_{m,n}) \ge \frac12\eps_0$, and
on $\Omega_{m,n}$ we have the pointwise inequality $\n f_n - f_n\n_X \ge \delta_0$. Hence,
$$ \n f_n - f_m \n_{E(X)}  \ge \delta_0 \n \one_{\Omega_{m,n}}\n_{E}.
$$
 As a consequence of the next lemma, the sequence $(f_n)_{n\ge 1}$ has no convergent subsequence.
This completes the proof.

\begin{lemma} In the above setting, for all $r>0$
we have $\inf \{\n \one_A \n_{E}: \ \mu(A) \ge r\} >0.$
\end{lemma}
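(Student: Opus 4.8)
The plan is to argue by contradiction. Suppose that for some $r>0$ the infimum equals $0$. Then I can select measurable sets $A_n\subseteq\Om$ with $\mu(A_n)\ge r$ and $\n\one_{A_n}\n_E\le 2^{-n}$ for all $n\ge 1$. The first step is to pass from the individual sets to their tail unions $B_n:=\bigcup_{k\ge n}A_k$. Since $\sum_{k\ge n}\n\one_{A_k}\n_E\le 2^{1-n}<\infty$ and $E$ is complete, the Riesz--Fischer property of a Banach function space gives that the pointwise sum $\sum_{k\ge n}\one_{A_k}$ belongs to $E$ with $E$-norm at most $\sum_{k\ge n}\n\one_{A_k}\n_E\le 2^{1-n}$. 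As $\one_{B_n}\le\sum_{k\ge n}\one_{A_k}$ pointwise, the ideal (lattice) property of the norm yields $\n\one_{B_n}\n_E\le 2^{1-n}$, so that $\n\one_{B_n}\n_E\to 0$.

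The second step is to pass to the limit set $B:=\bigcap_{n\ge 1}B_n$. The $B_n$ decrease, and since $A_n\subseteq B_n$ we have $\mu(B_n)\ge r$; because $\mu$ is finite this forces $\mu(B)=\lim_n\mu(B_n)\ge r>0$. On the other hand $\one_B\le\one_{B_n}$ for every $n$, so the lattice property gives $\n\one_B\n_E\le\inf_n\n\one_{B_n}\n_E=0$. Since the norm of a Banach function space is positive definite, $\n\one_B\n_E=0$ implies $\one_B=0$ $\mu$-almost everywhere, i.e.\ $\mu(B)=0$, contradicting $\mu(B)\ge r$. This proves the lemma; applied with $r=\tfrac12\eps_0$ it provides the uniform lower bound $\n f_n-f_m\n_{E(X)}\ge\d_0\,\inf\{\n\one_A\n_E:\ \mu(A)\ge\tfrac12\eps_0\}>0$ invoked in the argument above, so that $(f_n)_{n\ge 1}$ is uniformly separated and has no convergent subsequence.

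The one step that requires care is the passage from the smallness of the individual norms $\n\one_{A_n}\n_E$ to the smallness of $\n\one_{B_n}\n_E$ for the countable union, and this is precisely where the structure of $E$ enters: one must know that the absolutely convergent series $\sum_{k\ge n}\one_{A_k}$ has its $E$-limit equal to the pointwise sum $\mu$-almost everywhere, so that this limit genuinely dominates $\one_{B_n}$. This is exactly the Riesz--Fischer property guaranteed by completeness of $E$ (equivalently, one may use the order-continuity hypothesis, writing $\bigcup_{k=n}^{N}A_k\uparrow B_n$ and letting $N\to\infty$, since $\one_{B_n}\in E$ by the ideal property). Everything else -- the decreasing-intersection computation for $\mu(B)$ and the positive-definiteness of the function norm -- is routine, and I do not expect any obstacle there.
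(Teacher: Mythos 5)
Your proof is correct and follows essentially the same route as the paper's: assume the infimum is zero, extract sets $A_n$ with $\mu(A_n)\ge r$ and $\n\one_{A_n}\n_E\le 2^{-n}$, apply the Riesz--Fischer property to the series $\sum_k\one_{A_k}$, and derive a contradiction from the fact that the limsup set $\bigcap_n\bigcup_{k\ge n}A_k$ has measure at least $r$. The only difference is cosmetic: the paper concludes by noting that $\sum_n\one_{A_n}=\infty$ on a set of positive measure (so the sum cannot lie in $E$), whereas you conclude via $\n\one_B\n_E=0$ together with $\mu(B)\ge r$, spelling out the continuity-from-above step that the paper leaves implicit.
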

\begin{proof}
If this were false, we could find $r_0>0$ and a sequence of sets $A_n$ satisfying $\mu(A_n)\ge r_0$
such that $\n \one_{A_n} \n_{E_\mu} \le 2^{-n}$.
Then, 
$\n \sum_{n\ge 1} \one_{A_n}\n_{E} \le 1.$
But this contradicts the fact that $\mu(A_n) \ge r$, since the latter implies 
$\sum_{n\ge 1} \one_{A_n}  = \infty$ on some set of $\mu$-measure $\frac12 r$. 
\end{proof}


%
%

\end{document}